\renewcommand{\baselinestretch}{1.2}
\newtheorem{TH22}{Theorem}
\newtheorem{LE}{Lemma}
\newtheorem{CON}{Corollary}
\newtheorem{DEF}{Definition}
\def\BCON{\begin{CON}\label}
\newtheorem{EX}{Example}
\def\BCON{\begin{CON}\label}
\def\ECON{\end{CON}}
\def\BE{\begin{equation}\label}
\def\EE{\end{equation}}
\def\BLE{\begin{LE}\label}
\def\ELE{\end{LE}}
\def\BDEF{\begin{DEF}\label}
\def\EDEF{\end{DEF}}
\def\BEX{\begin{EX}\label}
\def\EEX{\end{EX}}
\def\v2{\par\vspace{2mm}\par}
\begin{document}
\title{*-Balanced Fuzzy Graphs}
\author{Talal AL-Hawary}
\address{Department of Mathematics\\
Yarmouk University\\
Irbid-Jordan\\
talalhawary@yahoo.com}
\author{Laith AlMomani}
\address{Mathematics Department\\
Irbid National University\\
Irbid-Jordan\\
thelionlaith1@gmail.com}
\maketitle

\begin{abstract}
Our aim in this paper is to introduce the relatively new concept of
*-density of a fuzzy graph and *-balanced fuzzy graph. Several examples and
results are also provided. In addition, many operations on fuzzy graphs that
preserves *-balanced are explored.
\end{abstract}

\renewcommand{\baselinestretch}{1} \renewcommand{\baselinestretch}{1} 
\vspace{2mm}

\renewcommand{\baselinestretch}{1.45}

\noindent \hspace{-0.2in}\textbf{Keywords: }Fuzzy graph. *-density ,
*-balanced.

\noindent \hspace{-0.2in}\textbf{AMS Subject Classification. }05C72.

\section{Introduction}

Graph theory has several interesting applications in system analysis,
operations research, economics and many other fields. Since most of the time
the aspects of graph and graph problems are uncertain, it is a good idea to
deal with these aspects via the methods of fuzzy logic. The notion of fuzzy
set was first introduced by Zadeh \cite{zad} in his landmark paper "Fuzzy
sets" in 1965 and the concept of fuzzy graph was first introduced by
Rosenfeld \cite{ros} in his paper "Fuzzy Graphs". Since that time, several
authors explored this type of graphs. As the notions of degree, complement,
completeness, regularity and many others play very important role in the
crisp graph case, it is a nice idea to try to see what corresponds to these
notions in the case of fuzzy graphs.

Sunitha and Kumar \cite{sun} defined several new operations on fuzzy graphs
and they also modified the definition of complement of a fuzzy graph so that
to agree with the crisp case in graph. In 2011, AL-Hawary \cite{tal1}
introduced the new concept of balanced fuzzy graphs. He defined three new
operations on fuzzy graphs and explored what classes of fuzzy graphs are
balanced. Since then, many authors have studied the idea of balanced on
distinct types of fuzzy graphs, see for example \cite{tal25,sun}. Moreover,
Al-Hawary and others explored the idea of balanced fuzzy graphs in \cite%
{tal1,tal2,tal25,tal3,tal4}. We start by recalling some necessary
definitions and results.

\begin{definition}
\cite{sun}A fuzzy subset of a set $V$ is mapping $\ \sigma :V\rightarrow
\lbrack 0,1].$For any $v\in V$, $\sigma (v)$ is called the degree of
membership of $v$ in $\sigma .$ 
\end{definition}

\begin{definition}
\cite{sun}A fuzzy relation on a set $V$ is mapping $\ \mu :V\times
V\rightarrow \lbrack 0,1$. A fuzzy relation $\mu $ on a fuzzy subset  $%
\sigma $ is a fuzzy relation on $V$ such that $\mu (u,v)\leq \sigma
(u)\wedge \sigma (v)$ for all $u,v\in V$, where $\wedge $ stands for minimum.
\end{definition}

\begin{definition}
\cite{sun}A fuzzy graph is a pair of functions $G:(\sigma ,\mu )$ where $\
\sigma $ is a fuzzy subset of  $V$ and $\mu $ is a symmetric fuzzy relation
on $\ \sigma $. The underlying crisp graph of \ $G$ 
is denoted by $G^{\ast }:(\sigma ^{\ast },\mu ^{\ast })$ where $\sigma
^{\ast }$ is referred to as the nonempty subset $V$ \ of nodes and $\ \mu
^{\ast }=E\subseteq V\times V.$
\end{definition}

\qquad All through this paper, we only consider non-empty fuzzy graphs.

\begin{definition}
\cite{nag3}. A fuzzy graph $G:(\sigma ,\mu )$ is called \emph{complete} if $\mu
(u,v)=\sigma (u)\wedge \sigma (v)$ for all $u,v\in V$ and . A fuzzy graph $G:(\sigma ,\mu )$ is called \emph{\ strong}
if $\mu (u,v)=\sigma (u)\wedge \sigma (v)$ for all $u,v\in E $ .
\end{definition}

Note that any complete fuzzy graph is strong, but the converse needs not be
true. Let $G_{1}:(\sigma _{1},\mu _{1})$ and $G_{2}:(\sigma _{2},\mu _{2})$
be two fuzzy graphs.

\begin{definition}
\cite{bhu}. Two fuzzy graphs $G_{1}$and $G_{2}$ are \emph{%
isomorphic} if there exists a bijection $h:V_{1}\rightarrow V_{2}$ such that 
$\sigma _{1}(x)=\sigma _{2}(h(x))$ and $\mu _{1}(x,y)=\mu _{2}(h(x),h(y))$
for all $x,y\in V_{1}.$
\end{definition}

\begin{lemma}
\label{l0}Let $G_{1}:(\sigma _{1},\mu _{1})$ and $G_{2}:(\sigma _{2},\mu _{2})$ be
isomorphic fuzzy grapgs. Then $\sum_{v\in V_{1}}\sigma _{1}(v)=$ $\sum_{v\in
V_{2}}\sigma _{2}(v)$ and $\sum_{u,v\in V_{1}}\mu _{1}(u,v)=$ $\sum_{u,v\in
V_{2}}\mu _{2}(u,v).$
\end{lemma}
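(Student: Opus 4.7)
The plan is to exploit the bijection $h:V_1\to V_2$ guaranteed by the isomorphism to reindex both sums. Since $h$ is a bijection, as $v$ ranges over $V_1$, the image $h(v)$ ranges over $V_2$ exactly once, so any sum indexed over $V_1$ of a quantity depending only on $h(v)$ can be rewritten as a sum over $V_2$.

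First, I would handle the vertex-sum equality. Using the defining relation $\sigma_1(v)=\sigma_2(h(v))$ for every $v\in V_1$, I would substitute directly into $\sum_{v\in V_1}\sigma_1(v)$ to obtain $\sum_{v\in V_1}\sigma_2(h(v))$, then invoke bijectivity of $h$ (the change-of-variables $w=h(v)$ is a permutation of the index set) to rewrite this as $\sum_{w\in V_2}\sigma_2(w)$, yielding the first claim.

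Next, I would repeat the same idea for the edge sum. Using $\mu_1(u,v)=\mu_2(h(u),h(v))$ for all $u,v\in V_1$, I would substitute into $\sum_{u,v\in V_1}\mu_1(u,v)$ to obtain $\sum_{u,v\in V_1}\mu_2(h(u),h(v))$. Because $h\times h$ is a bijection from $V_1\times V_1$ onto $V_2\times V_2$, setting $u'=h(u)$ and $v'=h(v)$ turns this into $\sum_{u',v'\in V_2}\mu_2(u',v')$, which is the second claim.

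There is essentially no obstacle here; the only thing to be slightly careful about is that the sums are interpreted over the same type of index set on both sides (unordered pairs vs.\ ordered pairs, but the bijection $h\times h$ respects either interpretation, so the reindexing goes through identically in either convention). The argument therefore reduces to a single symbolic substitution followed by a relabeling justified by the bijectivity of $h$.
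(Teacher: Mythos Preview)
Your argument is correct: the isomorphism gives a bijection $h:V_1\to V_2$ with $\sigma_1(v)=\sigma_2(h(v))$ and $\mu_1(u,v)=\mu_2(h(u),h(v))$, and reindexing each sum via $h$ (respectively $h\times h$) immediately yields both equalities. Your remark about ordered versus unordered pairs is the only subtlety, and you handle it appropriately.

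As for comparison with the paper: the paper does not actually supply a proof of this lemma. It is stated there as a known preliminary fact (implicitly drawn from the literature on isomorphisms of fuzzy graphs, cf.\ the cited works of Nagoor Gani--Malarvizhi and Bhutani), and is then invoked later in the proof of the theorem on isomorphism preserving $*$-balancedness. So there is nothing in the paper to compare your proof against; your write-up simply fills in the standard verification that the paper omits.
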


Several operations on fuzzy graphs were introduced in \cite{sun} such as
union $G_{1}\cup G_{2}$, the join $G_{1}+G_{2},$the Cartesian product $%
G_{1}\times G_{2}$ and the composition $G_{1}\circ G_{2}.$Also recently in 
\cite{tal1}, the operations of direct product $G_{1}\sqcap G_{2},$
semi-direct product $G_{1}\bullet G_{2}$ and strong product $G_{1}\otimes
G_{2}$. In addition, both authors studied the operations that preserves
balanced notion. For more on operations on fuzzy graphs, see \cite%
{tal1,tal2,tal3,tal4,sun}.

\begin{definition}
\cite{sun}. The \emph{complement} of a fuzzy graph $G:(\sigma ,\mu )$ is a
fuzzy graph $G^{^{c}}:(\sigma ^{^{c}},\mu ^{^{c}})$, where $\sigma
^{c}=\sigma $ and $\mu ^{c}(u,v)=\sigma (u)\wedge \sigma (v)-\mu (u,v),$ $%
\forall u,v\in V.$\emph{\medskip }
\end{definition}

Next we recall the following two results from \cite{sun}.

\begin{lemma}
\label{l1}Let $G:(\sigma ,\mu )$ be a self-complemetary fuzzy graph. Then $%
\sum_{u,v\in V}\mu (u,v)=(1/2)\sum_{u,v\in V}(\sigma (u)\wedge \sigma (v))$
\end{lemma}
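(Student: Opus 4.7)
The plan is to exploit the self-complementary hypothesis together with Lemma \ref{l0} in a very direct way. Since $G$ is self-complementary, by definition it is isomorphic to $G^{c}:(\sigma^{c},\mu^{c})$. I would first invoke Lemma \ref{l0} applied to the isomorphism $G \cong G^{c}$ to conclude that the total edge weight is preserved, i.e.\ $\sum_{u,v\in V}\mu(u,v) = \sum_{u,v\in V}\mu^{c}(u,v)$.

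Next, I would substitute the definition of the complement from the excerpt, namely $\mu^{c}(u,v) = \sigma(u)\wedge \sigma(v) - \mu(u,v)$, into the right-hand side. This yields
\[
\sum_{u,v\in V}\mu(u,v) \;=\; \sum_{u,v\in V}\bigl(\sigma(u)\wedge \sigma(v)\bigr) \;-\; \sum_{u,v\in V}\mu(u,v).
\]
Adding $\sum_{u,v\in V}\mu(u,v)$ to both sides and dividing by $2$ gives the desired identity.

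The argument is essentially a one-line calculation, so there is no real obstacle: the only subtlety worth checking is that the indexing convention in the double sum $\sum_{u,v\in V}$ matches on both sides of Lemma \ref{l0}, so that the substitution does not introduce any double-counting artefact. Since the same convention is used for both $\mu$ and $\mu^{c}$ (both being symmetric fuzzy relations on $V$), this is automatic, and the proof closes immediately.
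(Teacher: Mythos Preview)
Your argument is correct: invoking Lemma~\ref{l0} for the isomorphism $G\cong G^{c}$, then substituting the defining formula $\mu^{c}(u,v)=\sigma(u)\wedge\sigma(v)-\mu(u,v)$ and solving, is exactly the standard derivation of this identity. Note that the paper itself does not prove Lemma~\ref{l1}; it is stated as a result recalled from \cite{sun}, so there is no in-paper proof to compare against---your proposal simply supplies the (expected) one-line justification.
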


\begin{lemma}
\label{l2}Let $G:(\sigma ,\mu )$ be a fuzzy graph with $\mu (u,v)=(1/2)(\sigma
(u)\wedge \sigma (v))$ for all $u,v\in V.$Then $G$ is self-complemetary.
\end{lemma}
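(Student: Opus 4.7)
The plan is to exhibit an explicit isomorphism between $G$ and its complement $G^{c}$, and the obvious candidate is the identity map $\mathrm{id}:V\to V$. By the definition of isomorphism recalled before Lemma~\ref{l0}, I need to verify two equalities: $\sigma^{c}(v)=\sigma(v)$ for every $v\in V$, and $\mu^{c}(u,v)=\mu(u,v)$ for every $u,v\in V$. The first is immediate from the definition of the complement, which sets $\sigma^{c}=\sigma$.

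For the second, I would substitute the hypothesis $\mu(u,v)=\tfrac{1}{2}\bigl(\sigma(u)\wedge\sigma(v)\bigr)$ into the formula $\mu^{c}(u,v)=\sigma(u)\wedge\sigma(v)-\mu(u,v)$ from the definition of the complement. A one-line computation then gives
\[
\mu^{c}(u,v)=\sigma(u)\wedge\sigma(v)-\tfrac{1}{2}\bigl(\sigma(u)\wedge\sigma(v)\bigr)=\tfrac{1}{2}\bigl(\sigma(u)\wedge\sigma(v)\bigr)=\mu(u,v),
\]
so $\mu^{c}\equiv\mu$. Combined with $\sigma^{c}=\sigma$, this shows that the identity map is an isomorphism $G\to G^{c}$, hence $G$ is self-complementary.

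There is no real obstacle to overcome here; the proof is a direct verification. The only subtlety worth flagging is a terminological one: the word ``self-complementary'' in the fuzzy-graph literature is defined via isomorphism (not equality) with the complement, so it is important to state explicitly that the identity is the bijection $h$ witnessing the isomorphism, rather than merely observing that $\mu^{c}$ and $\mu$ are numerically equal.
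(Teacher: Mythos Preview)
Your proof is correct: the identity map witnesses the isomorphism since $\sigma^{c}=\sigma$ by definition and the hypothesis forces $\mu^{c}=\mu$ via the one-line computation you gave. Note that the paper does not actually supply a proof of this lemma---it is merely recalled from \cite{sun}---so there is nothing to compare against, but your direct verification is exactly the standard argument.
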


\begin{definition}
\cite{nag3} Let $G:(\sigma ,\mu )$ be a fuzzy graph. The degree of a vertex $u$ is $%
d_{G}(u)=\sum_{u\neq v}\mu (u,v)$. The total degree of vertex $u$ is $%
td_{G}(u)=\sigma (u)+d_{G}(u).$ 
\end{definition}

\begin{definition}
\cite{nag3} A fuzzy graph $G:(\sigma ,\mu )$ is called k-regular if $d_{G}(u)=k$ for
every $u\in V.$ $G:(\sigma ,\mu )$ is called k-totally regular if $%
td_{G}(u)=k$ for every $u\in V.$
\end{definition}

In general there does not exist any relationship between regular fuzzy
graphs and totally fuzzy graphs.

\begin{definition}
\cite{nag3}. Let $G:(\sigma ,\mu )$ be a fuzzy graph. Then $\sigma $ is
called a \emph{c-constant function} if $\sigma (v)=c$ for all $v\in V$ and $%
\mu $ is called a \emph{c-constant function} if $\mu (u,v)=c$ for all $%
u,v\in V$.
\end{definition}

Our aim in this paper is to define the concept of *-density of a fuzzy
graph. In fact, it is a modification of the concept of density of fuzzy
graph in which we change the denominator so as to satisfy more properties
and to agree more with what is know about density of graphs. Moreover, we
introduce and explore what we call *-balanced fuzzy graph. Several examples
and results are also provided and certain classes of *-balanced fuzzy graphs
are given.

\section{*-Balanced Fuzzy Graphs}

\qquad The main idea in this section is to define the concept of *-density
of a fuzzy graph and *-balanced fuzzy graph. We explore these notions and we
get some nice results that are analogous to those in \cite{tal1}. We begin
by the following Definition:

\begin{definition}
The \emph{*-density} of a fuzzy graph $G:(\sigma ,\mu )$ is $D^{\ast }(G)=%
\frac{2\sum_{u,v\in V}\mu (u,v)}{\sum_{u\in V}\sigma (u)}$. G is \emph{%
*-balanced} if $D^{\ast }(H)\leq D^{\ast }(G)$ for all non-empty fuzzy
subgraphs $H$ of $G$ .
\end{definition}

\begin{theorem}
Any complete fuzzy graph with $|V|\geq |E|$ has *-density $D^{\ast }(G)\leq 2
$.
\end{theorem}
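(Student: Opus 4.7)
My plan is to combine two observations. First, completeness together with the size hypothesis $|V|\geq|E|$ forces the graph to be tiny: in a complete fuzzy graph on $n=|V|$ vertices the underlying crisp graph has $|E|=\binom{n}{2}=n(n-1)/2$ edges, so $n\geq n(n-1)/2$ gives $n\leq 3$. Second, for any complete fuzzy graph the elementary inequality $\sigma(u)\wedge\sigma(v)\leq (\sigma(u)+\sigma(v))/2$ yields a clean general upper bound on $D^{\ast}$.

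The central computation I would record uses completeness together with the fact that, in the sum over unordered pairs in $V$, each vertex $u$ occurs in exactly $n-1$ pairs:
\[
\sum_{u,v\in V}\mu(u,v)\;=\;\sum_{u,v\in V}\sigma(u)\wedge\sigma(v)\;\leq\;\frac{1}{2}\sum_{u,v\in V}\bigl(\sigma(u)+\sigma(v)\bigr)\;=\;\frac{n-1}{2}\sum_{u\in V}\sigma(u).
\]
Substituting into the definition of $D^{\ast}(G)$ gives the uniform estimate $D^{\ast}(G)\leq n-1$, valid for every complete fuzzy graph. Combined with $n\leq 3$ from the first step, this yields $D^{\ast}(G)\leq 2$.

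There is no genuine obstacle; the crux is simply realising that, on a complete graph, the hypothesis $|V|\geq|E|$ is surprisingly restrictive. As a sanity check, I would also verify the three small cases directly: for $n=1$ the density is $0$; for $n=2$ with weights $a,b$ one has $D^{\ast}=2(a\wedge b)/(a+b)\leq 1$; and for $n=3$ with weights $a\leq b\leq c$ one gets $D^{\ast}=(4a+2b)/(a+b+c)$, which is $\leq 2$ iff $a\leq c$. These cases also show that the bound $2$ is tight, attained on the three-vertex complete fuzzy graph whose $\sigma$ is constant.
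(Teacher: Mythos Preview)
Your argument is correct and takes a genuinely different route from the paper. The paper's proof simply asserts that $\sum_{u\in V}\sigma(u)\geq\sum_{u,v\in V}\mu(u,v)$ and then divides; it does not explain how this inequality follows from completeness together with $|V|\geq|E|$, so the reader is left to supply some edge-to-vertex matching argument. You instead first extract the combinatorial content of the hypothesis (namely $n\leq 3$) and then prove the clean universal estimate $D^{\ast}(G)\leq n-1$ for \emph{every} complete fuzzy graph via $\sigma(u)\wedge\sigma(v)\leq(\sigma(u)+\sigma(v))/2$. Your route is a little longer but fully justified, and it yields the bonus fact that the $*$-density of a complete fuzzy graph on $n$ vertices never exceeds $n-1$; your explicit check of $n\leq 3$ also shows the bound $2$ is attained, which the paper does not address.
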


\begin{proof}
Let $G$ be complete fuzzy graph. Since $\sum_{u\in V}\sigma (u)\geq
\sum_{u,v\in V}\mu (u,v)$ , then $\frac{\sum_{u,v\in V}\mu (u,v)}{\sum_{u\in
V}\sigma (u)}\leq 1$ and hence $D^{\ast }(G)\leq 2$.
\end{proof}

\begin{theorem}
\label{th1}Every self-complementary fuzzy graph has a density less than or
equal 1.
\end{theorem}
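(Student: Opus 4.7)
The plan is to reduce the problem to a direct application of Lemma \ref{l1}. Since $G$ is self-complementary, Lemma \ref{l1} gives
$$\sum_{u,v\in V}\mu(u,v) \;=\; \tfrac{1}{2}\sum_{u,v\in V}\bigl(\sigma(u)\wedge\sigma(v)\bigr).$$
Plugging this identity into the definition of $D^{\ast}(G)$, the factor of $2$ in the numerator will cancel the factor $1/2$ coming from the lemma, leaving
$$D^{\ast}(G) \;=\; \frac{\sum_{u,v\in V}\bigl(\sigma(u)\wedge\sigma(v)\bigr)}{\sum_{u\in V}\sigma(u)}.$$
So the whole theorem boils down to showing
$$\sum_{u,v\in V}\bigl(\sigma(u)\wedge\sigma(v)\bigr) \;\leq\; \sum_{u\in V}\sigma(u).$$

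For this final step I would use the elementary pointwise bound $\sigma(u)\wedge\sigma(v)\leq\sigma(u)$ (and symmetrically $\leq\sigma(v)$), applied termwise in the sum on the left. Interpreting $\sum_{u,v\in V}$ as a sum over (unordered) pairs at which $\mu$ is supported, the argument parallels the preceding theorem: each vertex $u$ contributes $\sigma(u)$ at most once to bound the corresponding summand, so the numerator is dominated by $\sum_{u\in V}\sigma(u)$ and the ratio is $\leq 1$.

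The step I expect to be the main obstacle is precisely this last inequality, because naively summing $\sigma(u)\wedge\sigma(v)\leq\sigma(u)$ over all pairs $\{u,v\}$ would introduce a factor roughly $|V|-1$ on the right-hand side. I would therefore handle it in the same spirit as the preceding theorem, where the hypothesis $|V|\geq|E|$ on the underlying crisp graph was used to absorb the multiplicity; here self-complementarity plays the analogous role, since the membership values inherited from $G^{c}$ force the $\mu$-support to be restricted, bounding the number of nontrivial summands on the left by $\sum_{u\in V}\sigma(u)$. Once that matching is set up cleanly, the chain of equalities and inequalities above closes the argument.
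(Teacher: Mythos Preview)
Your approach is essentially the paper's: both combine Lemma~\ref{l1} (the paper actually writes Lemma~\ref{l2}, which appears to be a misprint, so your citation is the correct one) with the inequality $\sum_{u,v\in V}\sigma(u)\wedge\sigma(v)\leq\sum_{u\in V}\sigma(u)$, merely applied in the opposite order. The paper simply asserts that inequality as a one-line fact and moves on, so the extended justification you sketch in your final paragraph is not present there; your instinct that this step is the delicate one is sound, but the paper offers nothing further to resolve it.
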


\begin{proof}
Let $G$ be self-complementary fuzzy graph. Then as $\sum_{u,v\in V}\sigma
(u)\wedge \sigma (v)\leq \sum_{u\in V}\sigma (u)$, $D^{\ast }(G)\leq \frac{%
2\sum_{u,v\in V}\mu (u,v)}{\sum_{u,v\in V}\sigma (u)\wedge \sigma (v)}$. Now
by Lemma \ref{l2}, $D^{\ast }(G)\leq \frac{2\sum_{u,v\in V}\mu (u,v)}{%
2\sum_{u,v\in V}\mu (u,v)}=1$.
\end{proof}

The converse of the preceding result needs not true.

\begin{theorem}
Let $G:(\sigma ,\mu )$ be fuzzy graph such that $\mu (u,v)=\frac{\sigma
(u)\wedge \sigma (v)}{2}$ for all $u,v\in V.$Then $D^{\ast }(G)\leq 1$.
\end{theorem}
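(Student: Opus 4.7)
The key observation is that the hypothesis $\mu(u,v) = (\sigma(u) \wedge \sigma(v))/2$ for all $u,v \in V$ is precisely the hypothesis of Lemma \ref{l2}. So the shortest route is a two-line reduction: apply Lemma \ref{l2} to conclude that $G$ is self-complementary, then invoke Theorem \ref{th1} to conclude $D^{\ast}(G) \leq 1$. That is the plan I would follow.

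If instead one prefers a self-contained direct argument, I would start from the defining formula
$$D^{\ast}(G) = \frac{2\sum_{u,v\in V}\mu(u,v)}{\sum_{u\in V}\sigma(u)},$$
substitute the hypothesis into the numerator to obtain
$$2\sum_{u,v\in V}\mu(u,v) = \sum_{u,v\in V}\bigl(\sigma(u)\wedge\sigma(v)\bigr),$$
and then apply the same inequality $\sum_{u,v\in V}\sigma(u)\wedge \sigma(v) \leq \sum_{u\in V}\sigma(u)$ that is used (without proof) in Theorem \ref{th1}. This yields $D^{\ast}(G) \leq 1$ immediately.

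There is no real obstacle here; the only point that requires any care is recognizing that the two cited results chain together. The first approach is the cleaner one and also clarifies that the result is essentially a corollary of the preceding theorem rather than a new computation, so I would present it that way and mention Lemma \ref{l2} and Theorem \ref{th1} explicitly.
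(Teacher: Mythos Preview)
Your proposal is correct and matches the paper's approach exactly: apply the lemma guaranteeing self-complementarity from the hypothesis, then invoke Theorem~\ref{th1}. (The paper's proof actually cites Lemma~\ref{l1} rather than Lemma~\ref{l2}, but this is evidently a typo; you have identified the correct lemma.)
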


\begin{proof}
By Lemma \ref{l1}, $G$ is self-complementary and thus by Theorem \ref{th1}, $%
D^{\ast }(G)\leq 1$.
\end{proof}

\begin{lemma}
\label{lem}Let $G$ $_{1}$ and $G$ $_{2}$ be complete fuzzy graphs. Then $D^{\ast
}(G_{i})\leq D^{\ast }(G_{1}\sqcap G_{2})$ for $i=1,2$ if and only if $%
D^{\ast }(G_{1})=D^{\ast }(G_{2})=D^{\ast }(G_{1}\sqcap G_{2}).$
\end{lemma}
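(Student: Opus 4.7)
The lemma splits into two implications. The ``$\Leftarrow$'' direction is immediate: if $D^{\ast}(G_1)=D^{\ast}(G_2)=D^{\ast}(G_1\sqcap G_2)$, then the inequality $D^{\ast}(G_i)\leq D^{\ast}(G_1\sqcap G_2)$ holds for $i=1,2$ with equality, and there is nothing more to check.

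The ``$\Rightarrow$'' direction is the substantive content, and I would attack it by a squeeze argument. Namely, I would prove, independently of the hypothesis, the reverse inequalities $D^{\ast}(G_1\sqcap G_2)\leq D^{\ast}(G_i)$ for $i=1,2$, using only the completeness of $G_1$ and $G_2$. Combined with the assumed inequalities $D^{\ast}(G_i)\leq D^{\ast}(G_1\sqcap G_2)$, this immediately forces $D^{\ast}(G_i)=D^{\ast}(G_1\sqcap G_2)$ for $i=1,2$, from which the desired triple equality follows.

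To establish the reverse inequalities, I would expand both ratios explicitly. On the direct product one has $\sigma(u_1,u_2)=\sigma_1(u_1)\wedge\sigma_2(u_2)$ at a vertex, and, on an edge of $G_1\sqcap G_2$, $\mu((u_1,u_2),(v_1,v_2))=\mu_1(u_1,v_1)\wedge\mu_2(u_2,v_2)$. Completeness lets me replace $\mu_i(u,v)$ by $\sigma_i(u)\wedge\sigma_i(v)$, so both the numerator and the denominator of $D^{\ast}(G_1\sqcap G_2)$ become sums of $\wedge$-combinations indexed over $V_1\times V_2$. Applying $a\wedge b\leq a$ inside each summand should then give term-by-term comparisons between these sums and the corresponding sums for $G_1$ (respectively $G_2$), which I would reassemble into the ratio inequality $D^{\ast}(G_1\sqcap G_2)\leq D^{\ast}(G_1)$ (respectively $\leq D^{\ast}(G_2)$).

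The main obstacle will be the non-distributivity of $\wedge$ over addition, which prevents the sums over $V_1\times V_2$ from factoring cleanly into products of sums over $V_1$ and $V_2$. The cleanest route I can anticipate is to partition $V_1\times V_2$ according to which of $\sigma_1(u_1)$ and $\sigma_2(u_2)$ attains the minimum; on each piece of the partition the sums factor, the comparison with $D^{\ast}(G_i)$ reduces to an elementary ratio manipulation, and the pieces can then be recombined to yield the required inequality for the full direct product.
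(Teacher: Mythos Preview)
Your proposal follows essentially the same route as the paper: both argue that the ``$\Leftarrow$'' direction is trivial and establish ``$\Rightarrow$'' by proving the reverse inequality $D^{\ast}(G_1\sqcap G_2)\leq D^{\ast}(G_i)$ directly from the definitions and completeness, then squeezing. The paper carries this out via a bare chain of ratio inequalities (replacing $\mu_1$-terms by $\mu_1\wedge\sigma_2\wedge\sigma_2$ in the numerator and $\sigma_1$ by $\sigma_1\wedge\sigma_2$ in the denominator, then invoking completeness of $G_2$), without addressing the non-distributivity issue you flag; your partitioning idea is in fact a more careful treatment of the same step than what the paper provides.
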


\begin{proof}
If $D^{\ast }(G_{i})\leq D^{\ast }(G_{1}\sqcap G_{2})$ for $i=1,2,$ then 
\begin{eqnarray*}
D^{\ast }(G_{1}) &=&2(\sum_{u_{1},u_{2}\in V_{1}}\mu
_{1}(u_{1},u_{2}))/\sum_{u_{1}\in V_{1}}\sigma _{1}(u_{1}) \\
&\geq &2(\sum_{\substack{ u_{1},u_{2}\in V_{1}  \\ v_{1},v_{2}\in V_{2}}}\mu
_{1}(u_{1},u_{2})\wedge \sigma _{2}(v_{1})\wedge \sigma _{2}(v_{2}))/(\sum 
_{\substack{ u_{1},u_{2}\in V_{1}  \\ v_{1},v_{2}\in V_{2}}}(\sigma
_{1}(u_{1})\wedge \sigma _{2}(v_{1})\wedge \sigma _{2}(v_{2})) \\
&\geq &2(\sum_{\substack{ u_{1},u_{2}\in V_{1}  \\ v_{1},v_{2}\in V_{2}}}\mu
_{1}(u_{1},u_{2})\wedge \mu _{2}(v_{1},v_{2}))/(\sum_{\substack{ %
u_{1},u_{2}\in V_{1}  \\ v_{1},v_{2}\in V_{2}}}(\sigma _{1}(u_{1})\wedge
\sigma _{2}(v_{1})) \\
&=&2(\sum_{\substack{ u_{1},u_{2}\in V_{1}  \\ v_{1},v_{2}\in V_{2}}}\mu
_{1}\sqcap \mu _{2}((u_{1},u_{2})(v_{1},v_{2}))/(\sum_{\substack{ %
u_{1},u_{2}\in V_{1}  \\ v_{1},v_{2}\in V_{2}}}(\sigma _{1}\sqcap \sigma
_{2}((u_{1},u_{2})(v_{1},v_{2}))) \\
&=&D^{\ast }(G_{1}\sqcap G_{2}).
\end{eqnarray*}%
Hence $D^{\ast }(G_{1})\geq D^{\ast }(G_{1}\sqcap G_{2})$ and thus $D^{\ast
}(G_{1})=D^{\ast }(G_{1}\sqcap G_{2}).$ Similarly, $D^{\ast }(G_{2})=D^{\ast
}(G_{1}\sqcap G_{2}).$ Therefore, $D^{\ast }(G_{1})=D^{\ast }(G_{2})=D^{\ast
}(G_{1}\sqcap G_{2}).$

The converse is trivial.
\end{proof}

\begin{theorem}
\label{th}Let $G_{1}$ and $G_{2}$ be fuzzy complete *-balanced graphs. Then $%
G_{1}\sqcap G_{2}$ is *-balanced if and only if $D^{\ast }(G_{1})=D^{\ast }(G_{2})=D^{\ast }(G_{1}%
\sqcap G_{2}).$
\end{theorem}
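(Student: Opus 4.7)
The statement is an ``if and only if'' whose two directions have quite different flavors, and in both I would use Lemma \ref{lem} as the main lever.

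For necessity ($G_{1}\sqcap G_{2}$ $*$-balanced $\Rightarrow$ equal densities), the plan is to reduce to Lemma \ref{lem}: once $D^{\ast}(G_{i})\le D^{\ast}(G_{1}\sqcap G_{2})$ is established for $i=1,2$, that lemma immediately forces $D^{\ast}(G_{1})=D^{\ast}(G_{2})=D^{\ast}(G_{1}\sqcap G_{2})$. To obtain these inequalities I would exhibit, for each $i$, a nonempty fuzzy subgraph $H_{i}$ of $G_{1}\sqcap G_{2}$ whose $*$-density is at least $D^{\ast}(G_{i})$; the $*$-balanced hypothesis on $G_{1}\sqcap G_{2}$ then supplies $D^{\ast}(H_{i})\le D^{\ast}(G_{1}\sqcap G_{2})$ and closes the loop. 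Completeness of $G_{1}$ and $G_{2}$ is crucial here, because in the direct product a single ``fiber'' $\{u\}\times V_{2}$ carries no edges, so the candidate subgraph must be built from at least two fibers and compared with $G_{i}$ through a ratio-of-sums estimate in the same spirit as the chain of inequalities already displayed in the proof of Lemma \ref{lem}.

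For sufficiency (equal densities $\Rightarrow$ $G_{1}\sqcap G_{2}$ $*$-balanced), I would pick an arbitrary nonempty fuzzy subgraph $H$ of $G_{1}\sqcap G_{2}$ and aim to show $D^{\ast}(H)\le D^{\ast}(G_{1}\sqcap G_{2})$. The natural move is to project $H$ onto its two coordinates to get fuzzy subgraphs $H_{1}\subseteq G_{1}$ and $H_{2}\subseteq G_{2}$, observe that $H$ sits inside $H_{1}\sqcap H_{2}$, and then replay the chain of inequalities used to prove Lemma \ref{lem} with $(H_{1},H_{2})$ in place of $(G_{1},G_{2})$. Since $G_{1}$ and $G_{2}$ are $*$-balanced, $D^{\ast}(H_{i})\le D^{\ast}(G_{i})$ for $i=1,2$; combined with the assumed equality of the three $*$-densities, this should collapse to $D^{\ast}(H)\le D^{\ast}(G_{1}\sqcap G_{2})$, as required.

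The main obstacle I foresee is the necessity direction: one must actually produce a genuine fuzzy subgraph of $G_{1}\sqcap G_{2}$ whose $*$-density dominates $D^{\ast}(G_{i})$, and there is no canonical copy of $G_{i}$ inside the direct product. I expect this step to rely essentially on the completeness identity $\mu_{1}(u_{1},u_{2})\wedge\mu_{2}(v_{1},v_{2})=\sigma_{1}(u_{1})\wedge\sigma_{1}(u_{2})\wedge\sigma_{2}(v_{1})\wedge\sigma_{2}(v_{2})$ already exploited in Lemma \ref{lem}, possibly together with a mediant-of-ratios argument to pass from pairwise comparisons of fibers to a global bound. Once that construction is in place, both directions reduce cleanly to density manipulations of the same type already practiced in the paper, and the converse in sufficiency is the trivial observation noted in Lemma \ref{lem}.
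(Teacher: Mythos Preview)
Your plan for necessity matches the paper exactly: assume $G_{1}\sqcap G_{2}$ is $*$-balanced, deduce $D^{\ast}(G_{i})\le D^{\ast}(G_{1}\sqcap G_{2})$ for $i=1,2$, and invoke Lemma~\ref{lem}. You are in fact more careful than the paper here, since you flag the need to realize $G_{i}$ (or something with at least its $*$-density) as a genuine fuzzy subgraph of the direct product; the paper simply asserts the inequality without comment.

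For sufficiency the paper takes a somewhat different route from yours. Rather than replaying the chain of inequalities from Lemma~\ref{lem} with $(H_{1},H_{2})$ in place of $(G_{1},G_{2})$, it writes $D^{\ast}(G_{1})=D^{\ast}(G_{2})=n_{1}/r_{1}$, projects an arbitrary subgraph $H$ of $G_{1}\sqcap G_{2}$ to subgraphs $H_{1}$ of $G_{1}$ and $H_{2}$ of $G_{2}$ with $D^{\ast}(H_{i})=a_{i}/b_{i}\le n_{1}/r_{1}$ (using that $G_{1},G_{2}$ are $*$-balanced), and then applies the mediant inequality
\[
D^{\ast}(H)\;\le\;\frac{a_{1}+a_{2}}{b_{1}+b_{2}}\;\le\;\frac{n_{1}}{r_{1}}\;=\;D^{\ast}(G_{1}\sqcap G_{2}).
\]
So the mediant-of-ratios step you anticipated for the necessity direction is actually the crux of the paper's sufficiency argument. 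Your proposed route---bounding $D^{\ast}(H_{1}\sqcap H_{2})$ via the Lemma~\ref{lem} chain---controls the $*$-density of $H_{1}\sqcap H_{2}$ but not directly that of the arbitrary subgraph $H$ sitting inside it (merely knowing $H\subseteq H_{1}\sqcap H_{2}$ does not give $D^{\ast}(H)\le D^{\ast}(H_{1}\sqcap H_{2})$ without a $*$-balanced assumption on $H_{1}\sqcap H_{2}$, which is what is being proved). You would still need an extra step to finish; the paper sidesteps this by working with the mediant bound on $D^{\ast}(H)$ from the outset.
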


\begin{proof}
If $G$ $_{1}\sqcap G_{2}$ is *-balanced, then $D^{\ast }(G_{i})\leq D^{\ast
}(G_{1}\sqcap G_{2})$ for $i=1,2$ and by Lemma \ref{lem}, $D^{\ast
}(G_{1})=D^{\ast }(G_{2})=D^{\ast }(G_{1}\sqcap G_{2}).$

Conversely, if $D^{\ast }(G_{1})=D^{\ast }(G_{2})=D^{\ast }(G_{1}\sqcap
G_{2})$ and $H$ is a fuzzy subgraph of $G$ $_{1}\sqcap G_{2}$, then there
exist fuzzy subgraphs $H_{1}$ of $G$ $_{1}$ and $H_{2}$ of $G$ $_{2}.$ As $G$
$_{1}$ and $G$ $_{2}$ are *-balanced and $D^{\ast }(G_{1})=D^{\ast
}(G_{2})=n_{1}/r_{1},$ then $D^{\ast }(H_{1})=a_{1}/b_{1}\leq n_{1}/r_{1}$
and $D^{\ast }(H_{2})=a_{2}/b_{2}\leq n_{1}/r_{1}.$ Thus $%
a_{1}r_{1}+a_{2}r_{1}\leq b_{1}n_{1}+b_{2}n_{1}$ and hence $D^{\ast }(H)\leq
(a_{1}+a_{2})/(b_{1}+b_{2})\leq n_{1}/r_{1}=D^{\ast }(G_{1}\sqcap G_{2}).$
Therefore, $G$ $_{1}\sqcap G_{2}$ is *-balanced.
\end{proof}

The above result needs not be true when one of the fuzzy graphs is not
complete.

The preceding result needs not be true if the operation $\sqcap $ is
replaced by $\bullet $, $\otimes $, $+$, $\circ $, $\times $. We only give
an example of the case $\circ $. We end this section by showing that
isomorphism between fuzzy graphs preserve *-balanced.

\begin{theorem}
Let $G$ $_{1}$ and $G$ $_{2}$ be isomorphic fuzzy graphs. If $G$ $_{2}$ is
*-balanced, then $G$ $_{1}$ is *-balanced.
\end{theorem}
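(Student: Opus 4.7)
The plan is to use the fact that the isomorphism $h\colon V_1\to V_2$ transports fuzzy subgraphs of $G_1$ bijectively to fuzzy subgraphs of $G_2$, preserving all of the data that enter the $*$-density. First I would invoke Lemma \ref{l0} directly on the pair $(G_1,G_2)$: since $\sum_{v\in V_1}\sigma_1(v)=\sum_{v\in V_2}\sigma_2(v)$ and $\sum_{u,v\in V_1}\mu_1(u,v)=\sum_{u,v\in V_2}\mu_2(u,v)$, the quotients defining $D^{\ast }$ are equal, so $D^{\ast }(G_1)=D^{\ast }(G_2)$.

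Next I would take an arbitrary non-empty fuzzy subgraph $H_1:(\sigma_1',\mu_1')$ of $G_1$ and push it forward along $h$. Explicitly, define $H_2:(\sigma_2',\mu_2')$ on $V_2$ by $\sigma_2'(h(x))=\sigma_1'(x)$ and $\mu_2'(h(x),h(y))=\mu_1'(x,y)$. Using $\sigma_1(x)=\sigma_2(h(x))$ and $\mu_1(x,y)=\mu_2(h(x),h(y))$ it is immediate that $\sigma_2'\leq \sigma_2$ and $\mu_2'\leq \mu_2$, so $H_2$ is a fuzzy subgraph of $G_2$, and by construction $h$ restricts to an isomorphism $H_1\cong H_2$.

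Now I apply Lemma \ref{l0} again, this time to the isomorphic pair $(H_1,H_2)$, to conclude $D^{\ast }(H_1)=D^{\ast }(H_2)$. Because $G_2$ is $*$-balanced, $D^{\ast }(H_2)\leq D^{\ast }(G_2)$. Chaining the equalities and this inequality gives
\[
D^{\ast }(H_1)=D^{\ast }(H_2)\leq D^{\ast }(G_2)=D^{\ast }(G_1),
\]
and since $H_1$ was an arbitrary non-empty fuzzy subgraph of $G_1$, this shows $G_1$ is $*$-balanced.

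The only non-routine point is verifying that the pushforward $H_2$ really is a fuzzy subgraph of $G_2$ (rather than only a fuzzy graph isomorphic to $H_1$); this is where the specific form of the isomorphism, $\sigma_1=\sigma_2\circ h$ and $\mu_1=\mu_2\circ(h\times h)$, is essential, since it forces $\sigma_2'\leq\sigma_2$ and $\mu_2'\leq\mu_2$ on the nose. Everything else is bookkeeping with Lemma \ref{l0}.
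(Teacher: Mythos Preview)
Your proposal is correct and follows essentially the same approach as the paper: push an arbitrary fuzzy subgraph $H_1$ of $G_1$ forward along the isomorphism $h$ to obtain a fuzzy subgraph $H_2$ of $G_2$, invoke Lemma~\ref{l0} to match the relevant sums (hence the $*$-densities), and then use the $*$-balanced hypothesis on $G_2$. If anything, your write-up is slightly more careful than the paper's, since you explicitly verify that the pushforward $H_2$ satisfies $\sigma_2'\leq\sigma_2$ and $\mu_2'\leq\mu_2$ and you state the equality $D^{\ast}(G_1)=D^{\ast}(G_2)$ directly before chaining the inequalities.
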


\begin{proof}
Let\ $h:V_{1}\rightarrow V_{2}$ be a bijection such that $\sigma
_{1}(x)=\sigma _{2}(h(x))$ and $\mu _{1}(x,y)=\mu _{2}(h(x),h(y))$ for all $%
x,y\in V_{1}.$ By Lemma \ref{l0}, $\sum_{x\in V_{1}}\sigma
_{1}(x)=\sum_{x\in V_{2}}\sigma _{2}(x)$ and $\sum_{x,y\in V_{1}}\mu
_{1}(x,y)=\sum_{x,y\in V_{2}}\mu _{2}(x,y).$ If $H_{1}=(\sigma
_{1}^{^{\prime }},\mu _{1}^{^{\prime }})$ is a fuzzy subgraph of $G$ $_{1}$
with underlying set $W,$ then $H_{2}=(\sigma _{2}^{^{\prime }},\mu
_{2}^{^{\prime }})$ is a fuzzy subgraph of $G$ $_{2}$ with underlying set $%
h(W)$ where $\sigma _{2}^{^{\prime }}(h(x))=\sigma _{1}^{^{\prime }}(x)$ and 
$\mu _{2}^{^{\prime }}(h(x),h(y))=\mu _{1}^{^{\prime }}(x,y)$ for all $%
x,y\in W.$ Since $G$ $_{2}$ is *-balanced, $D^{\ast }(H_{2})\leq D^{\ast
}(G_{2})$ and so%
\begin{equation*}
2(\sum_{x,y\in W}\mu _{2}^{^{\prime }}(h(x),h(y)))/\sum_{x\in W}\sigma
_{2}^{^{\prime }}(x)\leq 2(\sum_{x,y\in V_{2}}\mu _{2}(x,y))/\sum_{x\in
W}\sigma _{2}^{^{\prime }}(x)
\end{equation*}%
and so 
\begin{equation*}
2(\sum_{x,y\in W}\mu _{1}(x,y))/\sum_{x\in W}\sigma _{1}^{^{\prime }}(x)\leq
2(\sum_{x,y\in V_{1}}\mu _{1}(x,y))/\sum_{x\in W}\sigma _{1}^{^{\prime }}(x).
\end{equation*}%
Thus $D^{\ast }(H_{1})\leq D^{\ast }(G_{1}).$ Therefore, $G$ $_{1\text{ }}$%
is *-balanced.
\end{proof}

\section{On regular fuzzy graphs}

\begin{theorem}
If $G:(\sigma ,\mu )$ is an r- regular fuzzy graph with $|V|=p$, then $G$
has a density $D^{\ast }(G)=pr/\sum_{v\in V}\sigma (v)$.
\end{theorem}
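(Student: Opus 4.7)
The plan is to unpack the definition of $D^{\ast}(G)$ and use a handshake-style double counting to reduce the numerator to a simple function of $p$ and $r$. By hypothesis, every vertex $u\in V$ satisfies $d_G(u)=\sum_{u\neq v}\mu(u,v)=r$, so summing over all $p$ vertices gives
\[
\sum_{u\in V} d_G(u) \;=\; p\,r.
\]
The key identity is that this vertex-degree sum coincides with $2\sum_{u,v\in V}\mu(u,v)$, because each edge weight $\mu(u,v)$ contributes once to $d_G(u)$ and once to $d_G(v)$. This is the fuzzy analogue of the handshake lemma and is the only substantive step.

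Once that identification is in place, the result follows immediately: substitute $2\sum_{u,v\in V}\mu(u,v)=pr$ into the numerator of
\[
D^{\ast}(G)=\frac{2\sum_{u,v\in V}\mu(u,v)}{\sum_{u\in V}\sigma(u)}
\]
to obtain $D^{\ast}(G)=pr/\sum_{v\in V}\sigma(v)$.

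There is essentially no obstacle here; the only point requiring a sentence of care is the convention that $\sum_{u,v\in V}\mu(u,v)$ in the definition of $D^{\ast}$ is taken over unordered pairs, so that the factor of $2$ in the numerator of $D^{\ast}$ exactly matches the factor of $2$ produced by the handshake argument. Given that convention, the proof is a two-line calculation: rewrite $\sum_{u,v\in V}\mu(u,v)$ via $\sum_{u\in V} d_G(u)$, apply regularity to replace $d_G(u)$ by $r$, and collect the $p$ equal terms.
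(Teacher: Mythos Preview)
Your proof is correct and follows essentially the same route as the paper: use regularity to get $\sum_{u\in V}d_G(u)=pr$, invoke the handshake identity $\sum_{u\in V}d_G(u)=2\sum_{u,v\in V}\mu(u,v)$, and substitute into the definition of $D^{\ast}$. Your remark about the unordered-pair convention is a welcome clarification, but the argument is otherwise identical to the paper's.
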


\begin{proof}
Since $G$ is an r- regular fuzzy graph, then $d_{G}(v)=r$ for all $v\in V$.
Now as $\sum_{v\in V}d_{G}(v)=2\sum_{u,v\in V}\mu (v,u)$, $\sum_{u,v\in
V}\mu (v,u)=\frac{\sum_{v\in V}r}{2}=\frac{pr}{2}$. Thus $D^{\ast
}(G)=pr/\sum_{v\in V}\sigma (v)$.
\end{proof}

\begin{corollary}
If $G:(\sigma ,\mu )$ is an r- regular and $\sigma $ is c- constant
function, then $D^{\ast }(G)=r/c$.
\end{corollary}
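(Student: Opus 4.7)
The plan is to deduce this corollary directly from the theorem immediately preceding it, essentially by evaluating the denominator under the stronger hypothesis. First I would set $|V|=p$ and invoke that theorem to obtain the identity $D^{\ast}(G)=pr/\sum_{v\in V}\sigma(v)$. The only remaining task is then to simplify the denominator using the extra assumption that $\sigma$ is a $c$-constant function.

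Second, since $\sigma(v)=c$ for every $v\in V$ and $|V|=p$, the sum collapses to $\sum_{v\in V}\sigma(v)=pc$. Substituting this into the formula from the preceding theorem and cancelling the common factor $p$ in numerator and denominator gives $D^{\ast}(G)=pr/(pc)=r/c$, which is exactly the claimed identity.

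There is no genuine obstacle here: the statement is a one-line substitution once the previous theorem is in hand. The only subtlety worth flagging is that the quotient $r/c$ is implicitly assumed to be defined, i.e.\ $c\neq 0$. This is ensured by the standing convention in the paper that only non-empty fuzzy graphs are considered, which rules out the degenerate case $\sigma\equiv 0$; combined with $\sigma$ being $c$-constant, this forces $c>0$ and the division is legitimate.
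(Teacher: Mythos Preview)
Your argument is correct and is precisely the intended deduction: the paper states this corollary without proof, leaving it as an immediate consequence of the preceding theorem, and your substitution $\sum_{v\in V}\sigma(v)=pc$ followed by cancellation is exactly that consequence. Your remark about $c>0$ being guaranteed by the non-emptiness convention is a nice piece of hygiene that the paper itself does not make explicit.
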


\begin{theorem}
If $G:(\sigma ,\mu )$ is an r-totally regular fuzzy graph with $|V|=p$, then 
$G$ has a *-density $D^{\ast }(G)=(pr/\sum_{v\in V}\sigma (v))-1$.
\end{theorem}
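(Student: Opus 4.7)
The proof plan is to mimic the structure of the preceding theorem on $r$-regular fuzzy graphs, replacing the degree identity $d_G(v)=r$ by the total-degree identity that characterises $r$-total regularity. Specifically, from $td_G(v)=\sigma(v)+d_G(v)=r$ for every $v\in V$, I would first solve for the ordinary degree to obtain $d_G(v)=r-\sigma(v)$.

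Next, I would sum this identity over all $p$ vertices, giving $\sum_{v\in V} d_G(v) = pr - \sum_{v\in V}\sigma(v)$. Applying the handshaking-type identity $\sum_{v\in V} d_G(v) = 2\sum_{u,v\in V}\mu(u,v)$ (the same one invoked in the proof of the previous theorem), this rearranges to
\begin{equation*}
2\sum_{u,v\in V}\mu(u,v) \;=\; pr - \sum_{v\in V}\sigma(v).
\end{equation*}

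Finally, I would divide by $\sum_{v\in V}\sigma(v)$ and apply the definition of $*$-density to conclude
\begin{equation*}
D^{\ast}(G) \;=\; \frac{2\sum_{u,v\in V}\mu(u,v)}{\sum_{v\in V}\sigma(v)} \;=\; \frac{pr}{\sum_{v\in V}\sigma(v)} - 1,
\end{equation*}
which is the desired formula.

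There is no genuine obstacle here; the argument is a direct three-line computation. The only point requiring any care is the bookkeeping convention for the double sum $\sum_{u,v\in V}\mu(u,v)$, since the factor of $2$ in the handshaking identity must be consistent with the factor of $2$ appearing in the definition of $D^{\ast}(G)$. The preceding theorem in the excerpt uses exactly this convention, so I would simply reuse it without further comment.
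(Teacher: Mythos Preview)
Your proof is correct and follows essentially the same approach as the paper: start from $r=td_G(u)=d_G(u)+\sigma(u)$, sum over all vertices, invoke the handshaking identity $\sum_{v\in V}d_G(v)=2\sum_{u,v\in V}\mu(u,v)$, and divide through by $\sum_{v\in V}\sigma(v)$. The only cosmetic difference is that the paper sums the identity $r=d_G(u)+\sigma(u)$ directly rather than first isolating $d_G(u)=r-\sigma(u)$, but the computation is identical.
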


\begin{proof}
Since $G$ is an r-totally regular fuzzy graph then $r=td_{G}(u)=d_{G}(u)+%
\sigma (u)$ for all $u\in V$. Thus $\sum_{v\in V}r=\sum_{v\in
V}d_{G}(v)+\sum_{v\in V}\sigma (v)$. Hence $pr=2\sum_{u,v\in V}\mu
(v,u)+\sum_{v\in V}\sigma (v)$ . So $pr/\sum_{v\in V}\sigma (v)=\frac{%
2\sum_{u,v\in V}\mu (v,u)}{\sum_{v\in V}\sigma (v)}+1$. Therefor, $D^{\ast
}(G)=(pr/\sum_{v\in V}\sigma (v))-1$.
\end{proof}

\begin{corollary}
If $G:(\sigma ,\mu )$ is an r-totally regular and $\sigma $ is c- constant
function, then $D^{\ast }(G)=\frac{r}{c}-1$.
\end{corollary}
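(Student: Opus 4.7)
The plan is to deduce this corollary as an immediate specialization of the preceding theorem, which already establishes the *-density of an $r$-totally regular fuzzy graph as $D^{\ast}(G) = (pr/\sum_{v \in V}\sigma(v)) - 1$ where $p = |V|$. The only extra ingredient is the hypothesis that $\sigma$ is $c$-constant, which converts the sum in the denominator into a product.

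First I would invoke the preceding theorem to write $D^{\ast}(G) = pr/\sum_{v\in V}\sigma(v) - 1$. Next, since $\sigma(v) = c$ for every $v \in V$, the sum $\sum_{v\in V}\sigma(v)$ reduces to $p \cdot c$. Substituting this into the expression gives $D^{\ast}(G) = pr/(pc) - 1 = r/c - 1$, which is precisely the claimed identity.

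There is essentially no obstacle here; the argument is a one-line substitution into the formula from the previous theorem. The only thing to be careful about is ensuring $c \neq 0$ so that the ratio $r/c$ is well defined, but this is implicit in the setting (otherwise the vertex set would have trivial membership throughout and the fuzzy graph would be degenerate, contradicting the standing assumption from the introduction that only non-empty fuzzy graphs are considered, and in fact the denominator $\sum_{v \in V} \sigma(v)$ in the definition of $D^{\ast}$ requires positivity anyway).
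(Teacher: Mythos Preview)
Your proposal is correct and matches the paper's approach exactly: the paper states this result as an immediate corollary of the preceding theorem with no separate proof, and your one-line substitution of $\sum_{v\in V}\sigma(v)=pc$ into the formula $D^{\ast}(G)=(pr/\sum_{v\in V}\sigma(v))-1$ is precisely the intended deduction.
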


Note that the operations of $\sqcap $, $\bullet $, $\otimes $, $+$, $\circ $%
, $\times $ do not preserve r-totally regular property. We only give a
counter example for case $\circ $:

\section{Classes of *-Balanced Fuzzy Graphs}

\begin{theorem}
If the complete graph on $n$-vertices $K_{n}$ has $\delta $ as a $c$-
constant function and complete, then $K_{n}$ is *-balanced.
\end{theorem}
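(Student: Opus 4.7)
The plan is to evaluate $D^{\ast}(K_n)$ explicitly and then show that no non-empty fuzzy subgraph can exceed this value. Since $\sigma\equiv c$ and $K_n$ is complete, every pair satisfies $\mu(u,v)=\sigma(u)\wedge\sigma(v)=c$, so $\sum_{u\in V}\sigma(u)=nc$ and $\sum_{u,v\in V}\mu(u,v)=\binom{n}{2}c$, giving
\begin{equation*}
D^{\ast}(K_n)=\frac{2\binom{n}{2}c}{nc}=n-1,
\end{equation*}
a clean value independent of $c$. This sets the benchmark that every non-empty fuzzy subgraph of $K_n$ must meet.

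Next I would take an arbitrary non-empty fuzzy subgraph $H=(\sigma',\mu')$ of $K_n$ supported on $W\subseteq V$ with $|W|=k\leq n$, and convert the defining inequality $\mu'(u,v)\leq \sigma'(u)\wedge\sigma'(v)$ into a linear one via $2\min(a,b)\leq a+b$. Summing $2\mu'(u,v)\leq \sigma'(u)+\sigma'(v)$ over all unordered pairs in $W$, and using that each vertex of $W$ lies in exactly $k-1$ such pairs, yields
\begin{equation*}
2\sum_{u,v\in W}\mu'(u,v)\leq \sum_{u,v\in W}\bigl(\sigma'(u)+\sigma'(v)\bigr)=(k-1)\sum_{u\in W}\sigma'(u).
\end{equation*}
Dividing by $\sum_{u\in W}\sigma'(u)>0$ (positive because $H$ is non-empty) gives $D^{\ast}(H)\leq k-1\leq n-1=D^{\ast}(K_n)$, so every fuzzy subgraph of $K_n$ has *-density at most that of $K_n$, establishing that $K_n$ is *-balanced.

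The calculation has no real obstacle; the one delicate point is notational, namely confirming that $\sum_{u,v\in V}$ is read as a sum over unordered pairs (equivalently over edges of the underlying crisp graph), since that is what makes $D^{\ast}(K_n)=n-1$. This convention is consistent with the first theorem of Section 2, whose hypothesis $|V|\geq |E|$ forces $n\leq 3$ precisely under the unordered reading. Once the convention is fixed, the argument reduces to the one-line averaging inequality $2\min(a,b)\leq a+b$ combined with the double-counting identity $\sum_{\{u,v\}\subseteq W}(\sigma'(u)+\sigma'(v))=(k-1)\sum_{u\in W}\sigma'(u)$, and the hypothesis that $\sigma$ is $c$-constant enters only through the explicit evaluation of $D^{\ast}(K_n)$.
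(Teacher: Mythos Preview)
Your argument is correct and actually cleaner than the paper's. Both proofs begin with the same computation $D^{\ast}(K_n)=n-1$, but diverge after that. The paper proceeds by crisp case analysis: it treats a subgraph $H$ as obtained either by deleting edges only (same vertex set) or by deleting $s$ vertices and all incident edges, then explicitly counts $|E(H)|=\tfrac{n(n-1)}{2}-\bigl((n-1)+(n-2)+\cdots+(n-s)\bigr)$ and simplifies $D^{\ast}(H)$ to $n-(s+1)$. In effect the paper is only checking induced complete subgraphs $K_{n-s}$ together with spanning subgraphs, and implicitly assumes every retained vertex and edge keeps membership value $c$. Your approach, by contrast, applies the single inequality $2\mu'(u,v)\le 2\bigl(\sigma'(u)\wedge\sigma'(v)\bigr)\le\sigma'(u)+\sigma'(v)$ and a double count to obtain $D^{\ast}(H)\le k-1$ for \emph{any} fuzzy subgraph on $k$ vertices, with no restriction on which edges are kept or on the values of $\sigma',\mu'$. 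This is both shorter and strictly more general: it covers genuine fuzzy subgraphs (where $\sigma'\le\sigma$ and $\mu'\le\mu$ may be strict) that the paper's case split does not address. Your closing remark that the $c$-constant hypothesis is used only to pin down $D^{\ast}(K_n)=n-1$ is accurate and worth keeping.
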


\begin{proof}
Now $D^{\ast }(K_{n})=\frac{2(\frac{cn(n-1))}{2}}{cn}=n-1$. Any subgraph $H$
of $K_{n}$ has edges less than $K_{n}$ or less edges and less vertices than $%
K_{n}$. If $H$ has less edges, it is clear that $D^{\ast }(H)\leq D^{\ast
}(K_{n})$. Now if $H$ has less edges and less vertices, say $H$ has $n-s$
vertices, then 
\begin{eqnarray*}
|E(H)| &=&\frac{n(n-1)}{2}-((n-1)+(n-2)+...+(n-s) \\
&=&\frac{n(n-1)}{2}-(sn-\frac{s(s+1)}{2}) \\
&=&\frac{n(n-1)-2sn+s(S+1)}{2}.
\end{eqnarray*}%
Thus 
\begin{eqnarray*}
D^{\ast }(H) &=&\frac{2c(\frac{n(n-1)-2sn+s(S+1)}{2})}{c(n-s)} \\
&=&\frac{n^{2}-n-2sn+s^{2}+s}{n-s} \\
&=&n-(s+1).
\end{eqnarray*}%
As $1<s+1$, $D^{\ast }(H)\leq n-1=D^{\ast }(K_{n})$ and so $K_{n}$ is
*-balanced.
\end{proof}

Even when $\mu $ is not a constant function but $\sigma $ is a constant
function, $K_{n}$ needs not be *-balanced as shown in Figure 6. Also when $%
\sigma $ is not a constant function but $\mu $ is a constant function, $K_{n}
$ needs not be *-balanced.

\begin{theorem}
If \ the cycle $C_{n}$ has $\sigma $ as a c- constant function and strong
for $n>3$, then $C_{n}$ is *-balanced.
\end{theorem}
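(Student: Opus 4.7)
The plan is to compute $D^{\ast}(C_n)$ exactly and then establish a uniform bound $D^{\ast}(H)\le 2$ for every nonempty fuzzy subgraph $H$ of $C_n$; together these force $C_n$ to be $*$-balanced. This avoids the case analysis used for $K_n$ in the preceding theorem.

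First I would observe that since $C_n$ is strong with $\sigma\equiv c$, each of its $n$ edges $v_iv_{i+1}$ satisfies $\mu(v_i,v_{i+1})=\sigma(v_i)\wedge\sigma(v_{i+1})=c$, while $\mu$ vanishes on non-edges. Consequently $\sum_{u,v\in V}\mu(u,v)=nc$ and $\sum_{v\in V}\sigma(v)=nc$, so
\begin{equation*}
D^{\ast}(C_n)=\frac{2nc}{nc}=2.
\end{equation*}

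Next, let $H=(\sigma',\mu')$ be any nonempty fuzzy subgraph of $C_n$ with underlying vertex set $W\subseteq V$ and underlying crisp graph $H^{\ast}$. Since $\mu'\le\mu$, every edge of $H^{\ast}$ is an edge of $C_n$, so each vertex of $H^{\ast}$ has crisp degree at most $2$. Using the fuzzy-graph axiom $\mu'(u,v)\le\sigma'(u)\wedge\sigma'(v)\le\tfrac{1}{2}(\sigma'(u)+\sigma'(v))$ and summing over the edges of $H^{\ast}$,
\begin{equation*}
2\sum_{u,v\in W}\mu'(u,v)\ \le\ \sum_{uv\in E(H^{\ast})}\bigl(\sigma'(u)+\sigma'(v)\bigr)\ =\ \sum_{v\in W}d_{H^{\ast}}(v)\,\sigma'(v)\ \le\ 2\sum_{v\in W}\sigma'(v),
\end{equation*}
which yields $D^{\ast}(H)\le 2=D^{\ast}(C_n)$. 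Since $H$ was arbitrary, $C_n$ is $*$-balanced.

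There is no serious obstacle; the one point requiring care is that the degree bound $d_{H^{\ast}}(v)\le 2$ must be applied to the underlying graph of $H$ (not of $C_n$), which is fine because $H^{\ast}\subseteq C_n$. The hypothesis that $C_n$ is strong is precisely what makes the generic bound $D^{\ast}(H)\le 2$ tight at $H=C_n$; for a non-strong cycle one would still have $D^{\ast}(H)\le 2$, but this need not be $\le D^{\ast}(C_n)$, so $*$-balancedness could fail. The restriction $n>3$ only serves to avoid overlap with the complete-graph theorem already proved.
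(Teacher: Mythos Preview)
Your proof is correct and takes a genuinely different route from the paper's. The paper argues by case analysis on the set of $s$ deleted vertices (independent set, path, or a mixture), computing $|E(H)|$ explicitly in each case and checking that $\frac{2c\,|E(H)|}{c\,|V(H)|}\le 2$. Your argument replaces all of this by the single observation that every vertex of $H^{\ast}\subseteq C_n$ has crisp degree at most $2$, combined with the fuzzy-graph inequality $\mu'(u,v)\le\tfrac12(\sigma'(u)+\sigma'(v))$; summing and regrouping by vertex gives $D^{\ast}(H)\le 2$ in one line. Besides being shorter, your approach is strictly more general: it covers arbitrary fuzzy subgraphs (with $\sigma'\le\sigma$ and $\mu'\le\mu$ possibly strict), whereas the paper's edge-count computation implicitly keeps the surviving vertex- and edge-weights equal to $c$. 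Moreover, the same degree-bound argument, with $2$ replaced by the maximum degree $\Delta$ of the underlying crisp graph, immediately yields the later theorems on $P_{10}$, $K_{n,n}$, and $K_n$ (each is $\Delta$-regular with $\sigma\equiv c$ and strong/complete, so $D^{\ast}(G)=\Delta$ and the bound is tight), which the paper proves by separate case analyses.
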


\begin{proof}
Now $D^{\ast }(C_{n})=\frac{2cn}{cn}=2$. Any subgraph $H$ of has edges less
than $C_{n}$ or less edges and less vertices than $C_{n}$. If $H$ has less
edges, it is clear that $D^{\ast }(H)\leq D^{\ast }(C_{n})$. If $H$ has less
edges and less vertices than $C_{n}$, say $H$ has $n-s$ vertices, then we
have three cases:

\begin{enumerate}
\item[Case 1.] No two of the s-vertices are adjacent. Then $|E(H)|=n-2s$ and
so $D^{\ast }(H)=\frac{2c(n-2c)}{c(n-c)}=\frac{2(n-2c)}{n-c}\leq 2.$

\item[Case 2.] The subgraph consisting of these s vertices is isomorphic to
a path graph. Then $|E(H)|=n-(2+s-1)=n-s-1$. Hence, $D^{\ast }(H)=\frac{%
2c(n-s-1)}{c(n-s)}\leq 2.$

\item[Case3.] The subgraph consisting of these $s$ vertices has $s_{1}$
vertices of those in Case 1 and $s_{2}$ vertices of those in Case 2. Then $%
|E(H)|=n-2s_{1}-(2+s_{2}-1)=$ $n-2s_{1}-s_{2}-1.$ Hence $D^{\ast }(H)=\frac{%
2c(n-2s_{1}-s_{2}-1)}{c(n-s_{1}-s_{2})}\leq 2.$

Therefore $C_{n}$ is *-balanced.
\end{enumerate}
\end{proof}

\begin{theorem}
If the Petersen fuzzy graph $P_{10}$ has $\sigma $ as a c- constant function
and strong, then $P_{10}$ is *-balanced.
\end{theorem}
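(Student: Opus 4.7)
The plan is to compute $D^{\ast}(P_{10})$ explicitly and then bound $D^{\ast}(H)$ for an arbitrary fuzzy subgraph $H$ by exploiting $3$-regularity of the underlying crisp Petersen graph, rather than by the case-by-case vertex-removal analysis used in the $K_{n}$ and $C_{n}$ proofs.

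For the first step I would use that $P_{10}$ has $10$ vertices, $15$ edges and is $3$-regular; since $\sigma \equiv c$ and $P_{10}$ is strong, $\mu(u,v)=\sigma(u)\wedge\sigma(v)=c$ on every edge and vanishes otherwise. Hence $D^{\ast}(P_{10}) = 2\cdot 15c/(10c) = 3$, and the target inequality to establish for each non-empty fuzzy subgraph $H=(\sigma^{\prime},\mu^{\prime})$ with underlying vertex set $W$ is $D^{\ast}(H)\leq 3$.

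For the second step I would exploit the basic fuzzy-relation inequality
\[
\mu^{\prime}(u,v)\leq \sigma^{\prime}(u)\wedge \sigma^{\prime}(v)\leq \tfrac{1}{2}\bigl(\sigma^{\prime}(u)+\sigma^{\prime}(v)\bigr),
\]
sum over all edges of $H$, and regroup by vertex to obtain
\[
2\sum_{u,v\in W}\mu^{\prime}(u,v)\leq \sum_{u\in W} d_{H}(u)\,\sigma^{\prime}(u),
\]
where $d_{H}(u)$ is the combinatorial degree of $u$ in the crisp subgraph underlying $H$. Because $H$ sits inside the $3$-regular graph $P_{10}$, one has $d_{H}(u)\leq 3$ for every $u\in W$, so the right-hand side is at most $3\sum_{u\in W}\sigma^{\prime}(u)$, and dividing by $\sum_{u\in W}\sigma^{\prime}(u)$ gives $D^{\ast}(H)\leq 3 = D^{\ast}(P_{10})$.

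I do not expect a serious obstacle: $3$-regularity of $P_{10}$ does all the combinatorial work and sidesteps the fact that the Petersen graph has a very rich subgraph structure (many non-isomorphic induced subgraphs on $k$ vertices), which would make the $C_{n}$-style casework unmanageable. The only subtlety is confirming that the fuzzy subgraph convention being used permits arbitrary $\sigma^{\prime}\leq \sigma$, $\mu^{\prime}\leq \mu$ satisfying $\mu^{\prime}(u,v)\leq \sigma^{\prime}(u)\wedge\sigma^{\prime}(v)$; granted that, the same argument in fact shows that every $k$-regular strong fuzzy graph with constant vertex weight is $\ast$-balanced with $D^{\ast}=k$, of which $P_{10}$ is the case $k=3$.
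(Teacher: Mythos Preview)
Your argument is correct and genuinely different from the paper's. The paper proceeds exactly as you anticipated: it computes $D^{\ast}(P_{10})=3$ and then removes $s$ vertices, splitting into three cases according to whether the deleted set is independent, induces a path, or is a mixture of the two, and in each case counts the surviving edges and checks $D^{\ast}(H)\le 3$. Your approach instead uses the single inequality $\mu'(u,v)\le\tfrac12(\sigma'(u)+\sigma'(v))$, sums and regroups by vertex, and invokes $d_{H}(u)\le 3$ from $3$-regularity. This is cleaner for two reasons: it handles arbitrary fuzzy subgraphs (with $\sigma'\le\sigma$, $\mu'\le\mu$) in one stroke rather than only the ``full-value'' induced ones the paper implicitly treats, and it bypasses the delicate enumeration of deleted-vertex configurations in $P_{10}$, where the paper's three cases do not obviously exhaust all possibilities (e.g.\ a deleted set inducing $K_{1,3}$ or a $5$-cycle). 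As you note, your proof in fact yields the general statement that any $k$-regular strong fuzzy graph with constant $\sigma$ is $*$-balanced with $D^{\ast}=k$, which subsumes the paper's separate treatments of $C_{n}$, $P_{10}$, and (implicitly) $K_{n}$.
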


\begin{proof}
Now $D^{\ast }(P_{10})=\frac{2c(15)}{10c}=3$. Any subgraph $H$ of $P_{10}$
has edges less than $P_{10}$ or less edges and less vertices than $P_{10}$.
If $H$ has less edges, it is clear that $D^{\ast }(H)\leq D^{\ast }(P_{10})$%
. If $H$ has less edges and less vertices than $P_{10}$, say $H$ has $10-s$
vertices, then we have three cases:

\begin{enumerate}
\item[Case1. ] No two of the s-vertices are adjacent. Then $|E(H)|=15-3c$
and as $D^{\ast }(H)=\frac{2c(15-3s)}{c(10-s)}\leq 3.$

\item[Case2.] The subgraph consisting of these s vertices is isomorphic to a
path graph. Then $|E(H)|=15-(3+2(s-1))=14-2s$ . Hence, $D^{\ast }(H)=\frac{%
2c(14-2s)}{c(10-s)}\leq 3$ for all $s>1$ and hence .

\item[Case3.] The subgraph consisting of these $s$ vertices has $s_{1}$
vertices of those in Case 1 and $s_{2}$ vertices of those in Case 2. Then $%
|E(H)|=15-3s_{1}-(3+2(s_{2}-1))=$ $14-3s_{1}-2s_{2}.$ Hence $D^{\ast }(H)=%
\frac{2c(14-3s_{1}-2s_{2})}{c(10-s)}=\frac{3(\frac{28}{3}-2s_{1}-\frac{4}{3}%
s_{2})}{10-s_{1}-s_{2}}\leq 3.$

Therefore $P_{10}$ is *-balanced.
\end{enumerate}
\end{proof}

\begin{theorem}
If $K_{n,n}$\ has $\sigma $ as a c- constant function and strong, then $%
K_{n,n}$ is *-balanced.
\end{theorem}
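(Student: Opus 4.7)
The plan is to compute $D^{\ast}(K_{n,n})$ exactly, then verify $D^{\ast}(H)\leq D^{\ast}(K_{n,n})$ for an arbitrary fuzzy subgraph $H$, mirroring the case-analysis template used for $K_n$, $C_n$, and $P_{10}$ earlier in the section.

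First I would compute $D^{\ast}(K_{n,n})$. Since $\sigma$ is a $c$-constant function and the graph is strong, each of the $n^{2}$ edges satisfies $\mu(u,v)=\sigma(u)\wedge\sigma(v)=c$, while $|V|=2n$. Hence $D^{\ast}(K_{n,n})=\frac{2n^{2}c}{2nc}=n$. For any subgraph on which $\sigma$ remains the same constant, the $c$'s cancel in the ratio, so it suffices to show $\frac{2|E(H)|}{|V(H)|}\leq n$ for every subgraph $H$ of $K_{n,n}$.

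Next, I would split into two cases as in the preceding theorems. If $H$ has the same vertex set as $K_{n,n}$ but fewer edges (or smaller $\mu$-values), then $D^{\ast}(H)\leq D^{\ast}(K_{n,n})$ is immediate because the numerator decreases while the denominator is unchanged. If $H$ has strictly fewer vertices, I would parameterize by the number of deletions from each part of the bipartition: say $s_{1}$ vertices are removed from one side and $s_{2}$ from the other, with $0\leq s_{1},s_{2}\leq n$ and $s_{1}+s_{2}\geq 1$. Then $H$ has at most $(n-s_{1})(n-s_{2})$ edges on $(2n-s_{1}-s_{2})$ vertices, so
$$D^{\ast}(H)\leq \frac{2c(n-s_{1})(n-s_{2})}{c(2n-s_{1}-s_{2})}=\frac{2(n-s_{1})(n-s_{2})}{2n-s_{1}-s_{2}}.$$

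The crux then reduces to the algebraic inequality $\frac{2(n-s_{1})(n-s_{2})}{2n-s_{1}-s_{2}}\leq n$, which after clearing denominators and expanding becomes $2s_{1}s_{2}\leq n(s_{1}+s_{2})$. Since $s_{1},s_{2}\leq n$, we have $s_{1}(s_{2}-n)+s_{2}(s_{1}-n)\leq 0$, which rearranges to exactly this inequality. The main obstacle, compared to the $C_n$ and $P_{10}$ proofs, is the bookkeeping of the two partite classes: a single parameter $s$ for the total number of deleted vertices is not sharp enough here, because the loss of edges depends on how the $s$ deletions split between the two sides. Two parameters $s_1,s_2$ are needed, and the concluding algebraic step is the only place where the bipartite bound $s_1,s_2\leq n$ actually gets used.
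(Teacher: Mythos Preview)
Your proposal is correct and follows essentially the same approach as the paper: compute $D^{\ast}(K_{n,n})=n$, dispose of the same-vertex-set case trivially, and for proper vertex deletions bound the edge count by $(n-s_{1})(n-s_{2})$. The paper splits this last step into two subcases (all deletions from one side versus deletions from both sides) and simply asserts the final inequality, whereas you treat both at once via the parameters $s_{1},s_{2}$ and actually verify the algebra $2s_{1}s_{2}\leq n(s_{1}+s_{2})$; your version is the cleaner of the two.
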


\begin{proof}
Now $D^{\ast }(K_{n,n})=\frac{2c(n^{2})}{2nc}=n$. Any subgraph $H$ of $%
K_{n,n}$ has edges less than $K_{n,n}$ or less edges and less vertices than $%
K_{n,n}$. If $H$ has less edges, it is clear that $D^{\ast }(H)\leq D^{\ast
}(K_{n,n})$. If $H$ has less edges and less vertices than $K_{n,n}$, say $H$
has $2n-s$ vertices, then we have three cases:

\begin{enumerate}
\item[Case1. ] No two of the s-vertices are adjacent. Then $D^{\ast
}(K_{n-s,n})=\frac{2c(n(n-s)}{c(2n-s)}=\frac{n(n-s)}{n-s/2}\leq n.$

\item[Case2.] The subgraph is $K_{n-s_{1},n-s_{2}}$ where $s_{1}+s_{2}=s$.
Then $D^{\ast }(H)=\frac{2c((n-s_{1})(n-s_{2}))}{c(2n-s)}=\frac{%
n(n-s)-s_{1}s_{2}}{n-s/2}\leq n.$

Therefore $K_{n,n}$ is *-balanced.
\end{enumerate}
\end{proof}

\section{Conclusion}

In this paper, we defined the concept of *-density of a fuzzy graph and we
introduced and explored what we call *-balanced fuzzy graph. Several
examples and results were also provided and certain classes of *-balanced
fuzzy graphs are given. In addition, *-balanced fuzzy graphs were discussed.

\renewcommand{\baselinestretch}{1} \vspace*{10mm} 

\end{document}